\newtheorem{thm}{Theorem}
\newtheorem{lemma}[thm]{Lemma}
\newtheorem{cor}[thm]{Corollary}
\newcommand{\reals}{\mathbb{R}}
\newcommand{\naturals}{\mathbb{N}}
\newcommand{\integers}{\mathbb{Z}}
\newcommand{\complex}{\mathbb{C}}
\newcommand{\A}{\mathcal{A}}
\newcommand{\ev}{\text{ev}}
\newcommand{\Inv}{\text{Inv}}
\begin{document}
\title{Weighted $L^p-$spaces on nilpotent, locally compact groups}
\author{Mateusz Krukowski}
\affil{\L\'od\'z University of Technology, Institute of Mathematics, \\ W\'ol\-cza\'n\-ska 215, \
90-924 \ \L\'od\'z, \ Poland}
\maketitle

\begin{abstract}
Our paper begins with a revision of spectral theory for commutative Banach algebras, which enables us to prove the $L^p_{\omega}-$conjecture for locally compact abelian groups. We follow an alternative approach to the one known in the literature. In particular, we do not resort to any structural theorems for locally compact groups at this stage. Subsequently, we discuss nilpotent, locally compact groups. The climax of the paper is the proof of the $L^p_{\omega}-$conjecture for these groups.
\end{abstract}

\smallskip
\noindent 
\textbf{Keywords : } abstract harmonic analysis, locally compact groups, nilpotent groups, weights\\
\vspace{0.2cm}
\\
\textbf{AMS Mathematics Subject Classification : } 43A15

\section{Introduction}

Throughout the paper, $G$ always stands for a locally compact group $G$ with a left Haar \mbox{measure $\mu$} (we will always assume that the group is Hausdorff). In Section \ref{section:Spectraltheory}, we will additionally require the groups to be abelian. Section \ref{section:Nilpotentgroups} will focus on the nilpotent, locally compact groups. 

The convolution of two measurable functions \mbox{$f,g : G\rightarrow \complex$} is given by
$$f\star g(x) = \int_G\ f(y)\ g\left(y^{-1}x\right)\ d\mu(y).$$

\noindent
The classical $L^p-$conjecture states that if $L^p(G)$ is closed under convolution for $p>1$, then the group $G$ must be compact. The first results relating to this conjecture date back to the papers of Urbanik \cite{Urbanik} and \.Zelazko \cite{ZelazkoonLpalgebras}. However, it was Rajagopalan who formulated the conjecture in his PhD thesis in 1963. In the next 25 years, the conjecture has been studied in a variety of locally compact groups, comp. \cite{GaudetGamlen}, \cite{Johnson}, \cite{Rajagopalandiscrete}, \cite{RajagopalanI}, \cite{RajagopalanII}, \cite{RajagopalanZelazko}, \cite{RickertL2}, \cite{RickertLp}. At last, the conjecture was resolved affirmatively in 1990 by Sadashiro Saeki, comp. \cite{Saeki}. 

The revival of the subject came in 2010, when Abtahi, Nasr-Isfahani and Rejali considered the following variant of the conjecture (comp. \cite{ANR}): Let $p>1$ and let $L^p_{\omega}(G)$ be the Banach space of measurable functions $f:G\rightarrow \complex$ such that 
$$\int_G\ (f\cdot\omega)^p\ d\mu < \infty,$$

\noindent
where $\omega:G\rightarrow \reals_+$ is a measurable and submultiplicative function. The norm in this space is $\|f\|_{p,\omega} = \|f\cdot\omega\|_p$. The authors conjectured that if $L^p_{\omega}(G)$ is closed under convolution, then $G$ is $\sigma-$compact. They proved this to be true for locally compact abelian groups (comp. \cite{ANR}) and for general locally compact groups, if $p>2$ (comp. \cite{ANRp2}). 

In this paper, we provide a partial answer to the following question:
\begin{center}
\textit{under what conditions on the group $G$ and the weight $\omega$, does $L^p_{\omega}(G)\star L^p_{\omega}(G)\subset L^p_{\omega}(G)$ imply the compactness of $G$?}
\end{center} 

\noindent
In order to avoid any misapprehension: a function $\omega : G\rightarrow \reals_+$ is called a \textit{weight} (comp. \cite{StegmanReiter}, p. 119) if it is measurable and \textit{submultiplicative}:
$$\forall_{x,y\in G}\ \omega(xy)\leq \omega(x)\ \omega(y).$$

\noindent
A weight $\omega$ is called \textit{diagonally bounded} if there exists $M>0$ such that 
$$\forall_{x\in G}\ \omega(x)\ \omega\left(x^{-1}\right) < M.$$

\noindent
Although the most natural examples of weights like \mbox{$\omega(x) = \exp(|x|)$} or \mbox{$\omega(x) = \left(1+|x|\right)^{\alpha}$}, $\alpha \geq 0$ are not diagonally bounded, this is still a sufficiently large family to be interesting to study. The most obvious example is $\omega(x) = \exp(x)$ on $\reals$, which is diagonally bounded since 
$$\omega(x)\ \omega(-x) = \exp(x)\ \exp(-x) = 1.$$

\noindent
In a similar vein, if $G$ is an arbitrary locally compact group, then $\omega:G\times \integers \rightarrow \reals_+$ given by $\omega(x,n) = 2^n$ is diagonally bounded. If we want to stray from the exponential-like behaviour, we may consider $\omega:G\times\reals_+\rightarrow\reals_+$ (comp. \cite{Maligranda}) given by
$$\omega(x,y) = y^{\alpha} \hspace{0.4cm}\text{or}\hspace{0.4cm} \omega(x,y) = y^{\alpha}\left(1+\sin{\ln{y}}\right),\hspace{0.4cm} \alpha\geq0.$$

As far as the organization of the paper is concerned, Section \ref{section:Spectraltheory} begins with a recap on the spectral theory of commutative Banach algebras. We deliberately avoid assuming the algebra to be unital since $L^p_{\omega}(G)$ may not be unital. We recall the notions of spectrum of an element and the spectral radius, and how they relate to the family of linear and multiplicative functionlas, the so-called spectrum of the algebra. We closely follow the exposition in Kaniuth's 'A Course on Commutative Banach Algebras' (comp. \cite{Kaniuth}). 

With these tools at hand, we can prove Theorem \ref{LpconjectureforLCA}. This is a generalization of the main theorem in \cite{Urbanik} and a variation of Theorem 4.5 in \cite{ANR}. Our proof does not use any structural theorems for locally compact groups. 

In Section \ref{section:Nilpotentgroups}, we shift our attention to nilpotent, locally compact groups. We discuss the upper central series and its properties. The climax of the paper is Theorem \ref{nilpotentgroups}, which establishes $L^p_{\omega}-$conjecture in the mentioned setting.

\section{Spectral theory and the $L^p_{\omega}(G)-$conjecture for locally compact abelian groups}
\label{section:Spectraltheory}

The crucial part of Theorem \ref{LpconjectureforLCA} (the main result in this section) is the fact that the spectrum of the commutative Banach algebra $L^p_{\omega}(G)$, denoted by $\Delta\left(L^p_{\omega}(G)\right)$, contains a nonzero element. In order to justify this claim, we briefly recall the notions of spectral theory. Let $\A$ be an arbitrary commutative Banach algebra. Let us emphasize the fact that we do not assume $\A$ to be unital, as $L^p_{\omega}(G)$ may not have a unit. 

In spite of what we said above, the property of having a unit usually simplifies the theory. Hence, the notion of unitization $\A_e = \A\times \complex$ of $\A$ is crucial. It is a simple exercise that with the following operations (comp. \cite{Kaniuth}, p. 6):
\begin{gather*}
\forall_{\substack{x,y\in \A\\ \alpha,\beta\in\complex}}\ (x,\alpha) + (y,\beta) = (x+y,\alpha+\beta),\\
\beta(x,\alpha) = (\beta x,\beta\alpha),\\
(x,\alpha)(y,\beta) = (xy+\alpha y + \beta x, \alpha \beta),\\
\|(x,\alpha)\| = \|x\| + |\alpha|,
\end{gather*}

\noindent
$\A_e$ becomes a unital, commutative Banach algebra. The element $(0,1)$ is an identity in $\A_e$ and every element $x\in \A$ can be identified with $(x,0)\in \A_e$. Moreover, by $\Inv(\A_e)$ we denote the family of those elements $x\in \A_e$, which possess an inverse, i.e. there exists $y \in A_e$ such that $xy = yx = (0,1)$. 

The \textit{spectrum of the element} $x\in \A$ is the set
$$\sigma(x) =\bigg\{\lambda \in\complex\ :\ \lambda (0,1) - (x,0)\not\in \Inv(\A_e)\bigg\}.$$

\noindent
The \textit{spectral radius} of $x$ is given by
$$\rho(x) = \max\bigg\{|\lambda| \ : \ \lambda \in \sigma(x)\bigg\}.$$

\noindent
We have deliberately used '$\max$' instead of '$\sup$' to underline the fact that $\sigma(x)$ is a nonempty, compact subset of $\complex$ \mbox{(comp. Theorem 1.2.8 in \cite{Kaniuth}, p. 10)}. Furthermore, the same theorem establishes a convenient way to compute the spectral radius:
\begin{gather}
\rho(x) = \lim_{n\rightarrow\infty}\ \|x^n\|^{\frac{1}{n}}.
\label{computespectralradius}
\end{gather}

The next result reveals the intricate link between the spectrum of the element and the spectrum of the whole algebra:

\begin{thm}(Theorem 2.2.5 in \cite{Kaniuth}, p. 54)\\
Let $\A$ be a commutative Banach algebra (not necessarily unital). If $x \in \A$, then
$$\sigma(x)\backslash \{0\} \subset \ev_x\left(\Delta(\A)\right) \subset \sigma(x),$$

\noindent
where $\ev_x$ is the evaluation map given by $\ev_x(\chi) = \chi(x).$
\label{Kaniuththeorem}
\end{thm}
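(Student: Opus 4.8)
The plan is to reduce the whole statement to the unital algebra $\A_e$ and then invoke the Gelfand correspondence between maximal ideals and characters. By the very definition recalled above, $\lambda\in\sigma(x)$ exactly when $\lambda(0,1)-(x,0)$ fails to lie in $\Inv(\A_e)$; in other words, $\sigma(x)$ is nothing but the ordinary spectrum of the element $(x,0)$ inside the unital, commutative Banach algebra $\A_e$, which I will write $\sigma_{\A_e}\left((x,0)\right)$. Thus the first step is to describe $\Delta(\A_e)$. I would show that every $\chi\in\Delta(\A)$ extends uniquely to a character $\widetilde{\chi}\in\Delta(\A_e)$ via $\widetilde{\chi}(y,\alpha)=\chi(y)+\alpha$, and that, conversely, restricting any $\psi\in\Delta(\A_e)$ to $\A$ (that is, to the elements $(y,0)$) yields either a member of $\Delta(\A)$ or the zero functional. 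Since a nonzero multiplicative functional must send the identity $(0,1)$ to $1$, the latter case forces $\psi$ to be the single distinguished character $\chi_\infty(y,\alpha)=\alpha$. Hence
$$\Delta(\A_e) = \left\{\widetilde{\chi} : \chi \in \Delta(\A)\right\} \cup \left\{\chi_\infty\right\},$$
a disjoint union, because $\chi_\infty$ vanishes on $\A$ whereas no $\widetilde{\chi}$ does.

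The heart of the argument is the Gelfand description of the spectrum in the unital case, namely that for $a\in\A_e$ one has $\sigma_{\A_e}(a)=\left\{\psi(a):\psi\in\Delta(\A_e)\right\}$. I would establish this through three classical ingredients: (i) in a commutative unital Banach algebra an element fails to be invertible precisely when it belongs to some maximal ideal; (ii) every maximal ideal is closed and is the kernel of a character, which follows from applying the Gelfand--Mazur theorem to the quotient, a Banach algebra that is a field and therefore isomorphic to $\complex$; and (iii) characters are automatically continuous. Combining these, $\lambda\in\sigma_{\A_e}(a)$ iff $a-\lambda(0,1)$ lies in some maximal ideal $\ker\psi$ iff $\psi(a)=\lambda$ for some $\psi\in\Delta(\A_e)$.

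It then remains to feed $a=(x,0)$ into this description and read off the values. For an extended character we get $\widetilde{\chi}(x,0)=\chi(x)=\ev_x(\chi)$, while $\chi_\infty(x,0)=0$. Consequently
$$\sigma(x) = \sigma_{\A_e}\left((x,0)\right) = \ev_x\left(\Delta(\A)\right) \cup \{0\}.$$
Both desired inclusions are now immediate: deleting $0$ yields $\sigma(x)\backslash\{0\}\subset\ev_x\left(\Delta(\A)\right)$, and the union plainly contains $\ev_x\left(\Delta(\A)\right)$, giving $\ev_x\left(\Delta(\A)\right)\subset\sigma(x)$.

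I expect the main obstacle to be ingredient (ii) above, the identification of maximal ideals with kernels of characters, since this is where commutativity and completeness are genuinely used, via Gelfand--Mazur and the fact that a maximal ideal is automatically closed. It is also the point that explains the asymmetry in the statement: the distinguished functional $\chi_\infty$ belongs to $\Delta(\A_e)$ but is not the extension of any element of $\Delta(\A)$, so it contributes the value $0$ to $\sigma(x)$ without being captured by $\ev_x\left(\Delta(\A)\right)$. This is precisely why the left-hand inclusion can only be asserted after removing $0$.
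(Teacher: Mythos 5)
Your argument is correct and is precisely the standard proof of this result: the paper itself gives no proof but merely quotes Theorem 2.2.5 of Kaniuth, where the argument proceeds exactly as you describe, via the identification $\Delta(\A_e)=\{\widetilde{\chi}:\chi\in\Delta(\A)\}\cup\{\chi_\infty\}$ and the Gelfand--Mazur description of the spectrum in the unital algebra $\A_e$. Your closing remark correctly isolates the role of $\chi_\infty$ as the reason the left-hand inclusion only holds after deleting $0$.
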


\begin{cor}
If there exists $x\in \A$ such that 
\begin{gather}
\lim_{n\rightarrow\infty}\ \|x^n\|^{\frac{1}{n}} > 0,
\label{limxn}
\end{gather}

\noindent
then $\Delta(\A) \neq \{0\}$.
\label{corollaryonspectrum} 
\end{cor}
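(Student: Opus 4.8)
The plan is to derive the corollary directly from Theorem \ref{Kaniuththeorem} together with the spectral radius formula \eqref{computespectralradius}. The hypothesis \eqref{limxn} says precisely that $\rho(x) = \lim_{n\rightarrow\infty}\ \|x^n\|^{\frac{1}{n}} > 0$, so I would begin by invoking \eqref{computespectralradius} to translate the assumption on the norms of the powers of $x$ into the statement $\rho(x) > 0$.

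Next I would unpack what a positive spectral radius tells us about $\sigma(x)$. Since $\rho(x) = \max\{|\lambda| : \lambda \in \sigma(x)\} > 0$ and the maximum is attained (the point of writing $\max$ rather than $\sup$), there exists some $\lambda_0 \in \sigma(x)$ with $|\lambda_0| = \rho(x) > 0$; in particular $\lambda_0 \neq 0$. Hence $\sigma(x)\backslash\{0\}$ is nonempty, containing at least this $\lambda_0$.

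Now I would feed this into the left inclusion of Theorem \ref{Kaniuththeorem}, namely $\sigma(x)\backslash\{0\} \subset \ev_x(\Delta(\A))$. Since the left-hand side is nonempty, so is $\ev_x(\Delta(\A))$, which forces $\Delta(\A)$ itself to be nonempty. The final step is to upgrade ``nonempty'' to ``$\neq \{0\}$'': I would argue that any character $\chi \in \Delta(\A)$ realizing the value $\lambda_0 = \chi(x)$ cannot be the zero functional, because $\chi(x) = \lambda_0 \neq 0$. Therefore $\Delta(\A)$ contains a nonzero element, which is exactly the claim.

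I do not anticipate a genuine obstacle here, as the corollary is essentially a repackaging of the cited theorem; the only point requiring a little care is the distinction between $\Delta(\A) \neq \emptyset$ and $\Delta(\A) \neq \{0\}$, and making sure the witnessing character is genuinely nonzero. This is handled cleanly by the observation that the character produced takes a nonzero value on $x$. One should also note that the convention here evidently includes the zero functional as an element of $\Delta(\A)$ (since otherwise the statement $\Delta(\A) \neq \{0\}$ would be phrased differently), so the content of the corollary is precisely the existence of a \emph{multiplicative} character that does not vanish identically.
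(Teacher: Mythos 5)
Your proposal is correct and follows essentially the same route as the paper: both use the spectral radius formula \eqref{computespectralradius} to deduce $\sigma(x)\backslash\{0\}\neq\emptyset$ and then apply the left inclusion of Theorem \ref{Kaniuththeorem} to produce a character not vanishing at $x$. The only difference is cosmetic — the paper phrases the argument as a proof by contradiction, while you argue directly.
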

\begin{proof}
Suppose, for the sake of contradiction, that $\Delta(\A) = \{0\}$. \mbox{Assumption (\ref{limxn})} implies, due to (\ref{computespectralradius}), that $\sigma(x)\backslash \{0\}\neq \emptyset$. Consequently, Theorem \ref{Kaniuththeorem} yields a contradiction as $\ev_x(\Delta(\A)) = \ev_x(\{0\})=0$. We conclude that $\Delta(\A) \neq \{0\}$.
\end{proof}

Before we prove the final theorem of this section, we need a technical lemma. Note, that the notation $A\lesssim B$ for $A,B\geq 0$ means that there exists a constant $C>0$ such that $A \leq C B$.

\begin{lemma}(comp. \cite{Krukowski})\\
Let $p,q,r \in [1,\infty)$. If $L^p_{\omega}(G)\star L^q_{\omega}(G)\subset L^r_{\omega}(G)$, then
$$\|f\star g\|_{r,\omega} \lesssim \|f\|_{p,\omega}\ \|g\|_{q,\omega}.$$

\noindent
In particular, if $\omega \equiv 1$, then
\begin{gather}
\|f\star g\|_r \lesssim \|f\|_p\ \|g\|_q.
\label{fgC0fg}
\end{gather}
\label{fstarglesssimfg}
\end{lemma}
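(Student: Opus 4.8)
The plan is to deduce the desired bilinear bound from \emph{joint} continuity of the convolution map, obtaining joint continuity by first establishing \emph{separate} continuity via the closed graph theorem and then upgrading to a uniform estimate via the Banach--Steinhaus theorem. Throughout I regard convolution as a bilinear map $B:L^p_{\omega}(G)\times L^q_{\omega}(G)\rightarrow L^r_{\omega}(G)$, $B(f,g)=f\star g$, which is everywhere defined precisely because of the hypothesis $L^p_{\omega}(G)\star L^q_{\omega}(G)\subset L^r_{\omega}(G)$.

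First I would fix $g\in L^q_{\omega}(G)$ and consider the linear operator $T_g:L^p_{\omega}(G)\rightarrow L^r_{\omega}(G)$, $T_g(f)=f\star g$. The aim of this step is to show $T_g$ is bounded by verifying that its graph is closed. So suppose $f_n\rightarrow f$ in $\|\cdot\|_{p,\omega}$ and $f_n\star g\rightarrow h$ in $\|\cdot\|_{r,\omega}$; I must prove $h=f\star g$ $\mu$-almost everywhere. Passing to a subsequence, norm convergence in $L^p_{\omega}(G)$ and in $L^r_{\omega}(G)$ yields $\mu$-a.e. convergence $f_n\rightarrow f$ and $f_n\star g\rightarrow h$. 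It then remains to identify the a.e. limit of $f_n\star g$ with $f\star g$, which I would do by estimating $|f_n\star g(x)-f\star g(x)|$ through H\"older's inequality applied to the factorization $|f_n-f|(y)\,|g(y^{-1}x)|=(|f_n-f|(y)\,\omega(y))\,(|g(y^{-1}x)|\,\omega(y)^{-1})$, so that the $\|\cdot\|_{p,\omega}$-convergence of $f_n$ forces pointwise convergence at those $x$ for which the complementary integral is finite. The closed graph theorem then gives that $T_g$ is bounded, i.e. $\|f\star g\|_{r,\omega}\lesssim\|f\|_{p,\omega}$ with a constant depending on $g$.

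By the symmetric argument, with the roles of the two factors interchanged, for each fixed $f\in L^p_{\omega}(G)$ the operator $g\mapsto f\star g$ is likewise bounded, so $\|f\star g\|_{r,\omega}\leq C_f\,\|g\|_{q,\omega}$. At this point I would invoke uniform boundedness. Consider the family $\{T_g:\|g\|_{q,\omega}\leq 1\}$ of bounded operators on $L^p_{\omega}(G)$. For each fixed $f$ the previous bound gives $\sup_{\|g\|_{q,\omega}\leq 1}\|T_g(f)\|_{r,\omega}\leq C_f<\infty$, so the family is pointwise bounded; the Banach--Steinhaus theorem then furnishes a single constant $C$ with $\sup_{\|g\|_{q,\omega}\leq 1}\|T_g\|=C<\infty$. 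Rescaling in $g$ yields $\|f\star g\|_{r,\omega}\leq C\,\|f\|_{p,\omega}\,\|g\|_{q,\omega}$ for all $f,g$, which is the assertion; the special case $\omega\equiv 1$ follows at once since $\|\cdot\|_{p,1}=\|\cdot\|_p$.

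I expect the main obstacle to be the graph-closedness step, and specifically the measure-theoretic identification of $h$ with $f\star g$. The weight $\omega$ obstructs a direct appeal to dominated convergence, so the argument must combine passage to $\mu$-a.e. convergent subsequences with the H\"older-type estimate above (or, alternatively, a duality and Fubini argument pairing $f_n\star g$ against test functions in $C_c(G)$), in which submultiplicativity of $\omega$ is used to dominate $\omega(y)^{-1}$ by $\omega(y^{-1}x)\,\omega(x)^{-1}$ and thereby control the complementary integral for $\mu$-almost every $x$. Once separate continuity is secured, the Banach--Steinhaus upgrade to the uniform bilinear bound is entirely routine.
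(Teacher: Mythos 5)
The paper gives no proof of this lemma at all --- it defers to the cited reference --- so there is nothing to compare line by line; judged on its own, your overall architecture (closed graph theorem for each $T_g$, then Banach--Steinhaus to upgrade separate continuity of the bilinear map to joint continuity) is the standard route and is sound in outline. The final Banach--Steinhaus step is indeed routine once separate continuity is secured.

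The gap sits exactly where you predicted, in the identification of the graph limit, and your proposed fix does not close it. The H\"older factorization $|f_n-f|(y)\,|g(y^{-1}x)|=\bigl(|f_n-f|(y)\,\omega(y)\bigr)\,\bigl(|g(y^{-1}x)|\,\omega(y)^{-1}\bigr)$ forces the complementary factor to be measured in $L^{p'}(d\mu(y))$, and submultiplicativity in the form $\omega(y)^{-1}\leq\omega(y^{-1}x)\,\omega(x)^{-1}$ only reduces that integral to $\omega(x)^{-p'}\int_G\,(|g|\,\omega)^{p'}(y^{-1}x)\ d\mu(y)$, which after translating and inverting (picking up the modular function on a nonunimodular group) is a constant multiple of $\omega(x)^{-p'}$ times an integral of $(|g|\,\omega)^{p'}$ that does not depend on $x$. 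Its finiteness would require $g\,\omega\in L^{p'}$, whereas the hypothesis only gives $g\,\omega\in L^{q}$; for general $p,q\in[1,\infty)$ --- already for $p=q=r\neq 2$ on a noncompact $G$, which is the case the paper actually uses --- these are incomparable, and since the bound is constant in $x$ one cannot salvage ``finite for a.e.\ $x$''. The standard repair avoids H\"older entirely: pass to a subsequence with $\sum_k\|f_{n_k}-f\|_{p,\omega}<\infty$, set $F=|f|+\sum_k|f_{n_k}-f|\in L^p_{\omega}(G)$ so that $|f_{n_k}|\leq F$ pointwise, note that the hypothesis applied to $F$ and $|g|$ makes $F\star|g|$ finite almost everywhere, hence $\int_G F(y)\,|g(y^{-1}x)|\ d\mu(y)<\infty$ for a.e.\ $x$, and apply dominated convergence at each such $x$ to get $f_{n_k}\star g(x)\rightarrow f\star g(x)$; together with a further a.e.\ convergent subsequence of $f_{n_k}\star g\rightarrow h$ this closes the graph. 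With that substitution the rest of your argument goes through.
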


We are ready to present the crowning result of the current section, which is a variation on Theorem 4.5 in \cite{ANR}. It is noteworthy that the proof does not use any structural theorems for locally compact groups. 

\begin{thm}
Let $G$ be a locally compact abelian group and let $L^p_{\omega}(G)$ be closed under convolution, where $\omega$ is diagonally bounded. Then $G$ is compact.
\label{LpconjectureforLCA}
\end{thm}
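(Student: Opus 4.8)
The plan is to equip $L^p_\omega(G)$ with the structure of a commutative Banach algebra, to locate a nonzero character in its spectrum $\Delta\left(L^p_\omega(G)\right)$ by means of Corollary \ref{corollaryonspectrum}, to reinterpret that character as a generalized character of $G$ sitting inside the dual space $L^q_{1/\omega}(G)$ (where $\frac1p+\frac1q=1$), and finally to exploit diagonal boundedness to force $\mu(G)<\infty$, which for a locally compact group is equivalent to compactness.

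First I would fix the algebra structure. As $G$ is abelian, convolution is commutative, and the hypothesis together with Lemma \ref{fstarglesssimfg} gives $\|f\star g\|_{p,\omega}\lesssim\|f\|_{p,\omega}\,\|g\|_{p,\omega}$; absorbing the implicit constant into an equivalent norm turns $L^p_\omega(G)$ into a commutative Banach algebra. By Corollary \ref{corollaryonspectrum} it then suffices to exhibit a single $f\in L^p_\omega(G)$ with $\lim_{n\to\infty}\|f^{\star n}\|_{p,\omega}^{1/n}>0$, since this guarantees $\Delta\left(L^p_\omega(G)\right)\neq\{0\}$, i.e. the existence of a nonzero multiplicative functional $\chi$.

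The crux of the argument is producing such an $f$, and this is where I expect the main obstacle. I would take $f=\mathds{1}_V$ for a symmetric compact neighbourhood $V$ of the identity; here one uses that a measurable submultiplicative $\omega$ is locally bounded and that $1\le\omega(x)\,\omega(x^{-1})<M$ forces $\omega$ to be bounded above and below on $V$, so that $f\in L^p_\omega(G)$. Since $f^{\star n}$ is supported in $V^n$ with $\|f^{\star n}\|_1=\mu(V)^n$, H\"older's inequality gives $\|f^{\star n}\|_p\ge\mu(V)^n\,\mu(V^n)^{-1/q}$, while submultiplicativity and the diagonal bound yield $\inf_{V^n}\omega\ge(\sup_V\omega)^{-n}$. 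Combining these, $\|f^{\star n}\|_{p,\omega}^{1/n}\gtrsim\mu(V)\,\mu(V^n)^{-1/(nq)}$. The essential point, and the genuine difficulty, is that this volume grows subexponentially, so that $\mu(V^n)^{1/n}\to 1$ and the spectral radius of $f$ stays bounded away from $0$.

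With a nonzero $\chi$ in hand, I would identify it concretely. Because $p>1$, the map $g\mapsto g\omega$ is an isometry of $L^p_\omega(G)$ onto $L^p(G)$, so $\left(L^p_\omega(G)\right)^*=L^q_{1/\omega}(G)$ and $\chi(f)=\int_G f\psi\,d\mu$ for some $\psi\in L^q_{1/\omega}(G)$. A routine Fubini computation shows that multiplicativity of $\chi$ is equivalent to $\psi(xy)=\psi(x)\psi(y)$ for almost every pair, and a standard regularization identifies $\psi$ (almost everywhere) with a continuous homomorphism $G\to\complex^\times$, nonzero since $\chi\neq0$. Then $|\psi|$ is multiplicative with $|\psi(x^{-1})|=|\psi(x)|^{-1}$, so the function $F:=|\psi|/\omega\in L^q(G)$ satisfies $F(x)\,F(x^{-1})=\left(\omega(x)\,\omega(x^{-1})\right)^{-1}>1/M$. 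Setting $A:=\{F>M^{-1/2}\}$, every $x$ has $x\in A$ or $x^{-1}\in A$, whence $G=A\cup A^{-1}$; integrability forces $\mu(A)\le M^{q/2}\|F\|_q^q<\infty$, and unimodularity of abelian groups gives $\mu(A^{-1})=\mu(A)$. Therefore $\mu(G)<\infty$, and a locally compact group of finite Haar measure is compact, which completes the proof.
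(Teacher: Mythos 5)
Your overall architecture coincides with the paper's: turn $L^p_{\omega}(G)$ into a commutative Banach algebra via Lemma \ref{fstarglesssimfg}, produce a nonzero character through Corollary \ref{corollaryonspectrum}, represent it by some $h\in L^{p'}_{1/\omega}(G)$, extract a.e.\ multiplicativity of $h$, and use diagonal boundedness to force $\mu(G)<\infty$. The one place where you genuinely diverge is the crucial step of exhibiting an element of positive spectral radius, and that is exactly where your argument has a gap. Your bound $\|\mathds{1}_V^{\star n}\|_{p,\omega}^{1/n}\gtrsim \mu(V)\,\mu(V^n)^{-1/(nq)}$ is fine, but you then simply assert that $\mu(V^n)^{1/n}\to 1$, i.e.\ that compactly generated LCA groups have subexponential volume growth. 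That is true, but it is not free: the usual proofs pass through the structure theorem $\reals^N\times\integers^M\times K$, which is precisely the kind of input the paper is at pains to avoid, and you supply no substitute. Note also that you do not actually need subexponential growth: a covering argument ($V^2\subset\bigcup_{i=1}^m x_iV$ forces $\mu(V^{n+1})\leq m^{n}\mu(V)$) shows that $\mu(V^n)^{1/n}$ stays bounded, and boundedness alone keeps $\mu(V^n)^{-1/(nq)}$ away from $0$, which is all your estimate requires. The paper sidesteps the issue entirely with a pointwise estimate: for a compact symmetric neighbourhood $K$ of $e$ one shows by induction that $\mathds{1}_{K^2}^{\star(n+1)}(x)\geq\mu(K)^n$ for every $x\in K$, whence $\lim_n\|\mathds{1}_{K^2}^{\star n}\|_{p,\omega}^{1/n}\geq\mu(K)>0$ with no reference to the growth of $K^n$. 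You should either adopt such an estimate or actually prove the boundedness of $\mu(V^n)^{1/n}$.

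Your endgame also differs from the paper's and leans on an unproved regularization. Writing $|\psi(x^{-1})|=|\psi(x)|^{-1}$ requires $\psi$ to be an everywhere-defined, nonvanishing homomorphism, whereas duality and Fubini only give the identity $\psi(xy)=\psi(x)\psi(y)$ for almost every pair; upgrading this to a continuous character $G\to\complex^{\times}$ is a Weil--Steinhaus-type argument that you invoke but do not carry out. The paper avoids it: from the a.e.\ identity alone, the substitution $x\mapsto xy$ together with $\omega(xy)\geq\omega(y)/\omega(x^{-1})$ and the diagonal bound yields $\int_G(|h|/\omega)^{p'}d\mu\lesssim\left(|h|(y)/\omega(y)\right)^{p'}\int_G(|h|/\omega)^{p'}d\mu$, hence $|h|/\omega\gtrsim1$ a.e., and $\mu(G)<\infty$ follows at once from $h\in L^{p'}_{1/\omega}(G)$. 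Your $G=A\cup A^{-1}$ device is attractive and would work, but as written it rests on the regularization step you have not justified.
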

\begin{proof}
Let $K$ be a symmetric and compact neighbourhood of an identity $e\in G$ with nonzero measure. Observe that for $x\in K$ we have
\begin{equation}
\begin{split}
&\mathds{1}_{K^2}^{n+1}(x) = \int_G\ \mathds{1}_{K^2}^n(y)\ \mathds{1}_{K^2}\left(y^{-1}x\right)\ d\mu(y) \\
&\geq \int_K\ \mathds{1}_{K^2}^n(y)\ \mathds{1}_{K^2}\left(y^{-1}x\right)\ d\mu(y) = \int_K\ \mathds{1}_{K^2}^n(y)\ d\mu(y).
\end{split}
\end{equation}

\noindent
An easy induction shows that for $n\in\naturals$ we have
\begin{gather}
\forall_{x\in K}\ \mathds{1}_{K^2}^{n+1}(x) \geq \mu^n(K).
\label{mathds1inequality}
\end{gather}

\noindent
Consequently, we obtain
$$\|\mathds{1}_{K^2}^{n+1}\|_{p,\omega} = \left(\int_K\ \left(\mathds{1}_{K^2}^{n+1}\cdot\omega\right)^p\ d\mu\right)^{\frac{1}{p}} \stackrel{(\ref{mathds1inequality})}{\geq} \mu^n(K)\ \left(\int_K\ \omega^p\ d\mu\right)^{\frac{1}{p}},$$

\noindent
where $\left(\int_K\ \omega^p\ d\mu\right)^{\frac{1}{p}}$ has a finite value due to Lemma 2.1 in \cite{Kuznetsova}. This implies that 
$$\lim_{n\rightarrow\infty}\ \|\mathds{1}_{K^2}^n\|_{p,\omega}^{\frac{1}{n}} \geq \mu(K) > 0.$$

\noindent
By Corollary \ref{corollaryonspectrum}, we conclude that $\Delta\left(L^p_{\omega}(G)\right)$ contains a nonzero element, which we denote by $\chi$.

The dual of $L^p_{\omega}(G)$ is $L^{p'}_{\frac{1}{\omega}}(G)$, where $\frac{1}{p}+\frac{1}{p'} = 1$. Hence there exists a nonzero $h\in L^{p'}_{\frac{1}{\omega}}(G)$ such that 
$$\chi(f) = \int_G\ f\cdot h\ d\mu.$$

\noindent
In the sequel, $C_c^+(G)$ stands for the family of compactly supported, continuous and nonnegative functions on $G$. Observe that 
\begin{equation*}
\begin{split}
\forall_{f,g \in C_c^+(G)}\ &\bigg|\int_G\ f\star g(x)\ h(x)\ d\mu(x)\bigg| \leq \int_G\ |f\star g(x)\ h(x)|\ d\mu(x)\\ &\stackrel{\text{H\"{o}lder inequality}}{\leq} \|f\star g\|_{p,\omega}\ \|h\|_{p',\frac{1}{\omega}} \stackrel{\text{Lemma}\ \ref{fstarglesssimfg}}{\lesssim} \|f\|_{p,\omega}\ \|g\|_{p,\omega}\ \|h\|_{p',\frac{1}{\omega}} < \infty.
\end{split}
\end{equation*}

\noindent
This means that the integrals, which appear below, are finite.

For $f,g \in C_c^+(G)$ we have
\begin{gather*}
\int_G\ f\star g(x)\ h^+(x)\ d\mu(x) = \int_G\ \left(\int_G\ f(y)\ g\left(y^{-1}x\right)\ d\mu(y)\right)\ h^+(x)\ d\mu(x) \\
= \int_G\ \int_G\ f(y)\ g\left(y^{-1}x\right)\ h^+(x)\ d\mu(y)\ d\mu(x) \\
\stackrel{\text{Tonelli thm}}{=} \int_G\ \int_G\ f(y)\ g\left(y^{-1}x\right)\ h^+(x)\ d\mu(x)\ d\mu(y) \\
\stackrel{x\mapsto yx}{=} \int_G\ \int_G\ f(y)\ g(x)\ h^+(yx)\ d\mu(x)\ d\mu(y),
\end{gather*} 

\noindent
where $h^+ = \max{(h,0)}$. We conclude that 
\begin{gather}
\forall_{f,g\in C_c^+(G)}\ \chi(f\star g) =  \int_G\ \int_G\ f(y)\ g(x)\ h^+(yx)\ d\mu(x)\ d\mu(y).
\label{chifstarg1}
\end{gather}

On the other hand, we have 
\begin{gather}
\forall_{f,g\in C_c^+(G)}\ \chi(f)\ \chi(g) = \int_G\ \int_G\ f(y)\ g(x)\ h^+(y)\ h^+(x)\ d\mu(x)\ d\mu(y).
\label{chifstarg2}
\end{gather}

\noindent
Comparing (\ref{chifstarg1}) and (\ref{chifstarg2}), we establish that for $f,g\in C_c^+(G)$ we have
$$\int_G\ \int_G\ f(y)\ g(x)\ \bigg(h^+(yx) - h^+(y)\ h^+(x)\bigg)\ d\mu(x)\ d\mu(y) = 0.$$

\noindent
We apply the same reasoning as in the proof of the fundamental lemma of calculus of variations (comp. \cite{GelfandFomin}, p. 22) to conclude that $h^+(yx) = h^+(y)\ h^+(x)$ for a.e. $x,y\in G$. An analogous reasoning works for $h^-=\max{(-h,0)}$, so 
\begin{gather}
\forall_{a.e.\ x,y\in G}\ h(xy) = h(x)\ h(y).
\label{hmultiplicative}
\end{gather}

Finally, we have
\begin{gather*}
\int_G\ \left(\frac{|h|(x)}{\omega(x)}\right)^{p'}\ d\mu(x) \stackrel{x\mapsto xy}{=} \int_G\ \left(\frac{|h|(xy)}{\omega(xy)}\right)^{p'}\ d\mu(x) \\
\stackrel{(\ref{hmultiplicative})}{=} |h|(y)^{p'}\ \int_G\ \left(\frac{|h|(x)}{\omega(xy)}\right)^{p'}\ d\mu(x)
\leq \left(\frac{|h|(y)}{\omega(y)}\right)^{p'}\ \int_G\ \left(|h|(x)\ \omega\left(x^{-1}\right)\right)^{p'}\ d\mu(x) \\
\lesssim \left(\frac{|h|(y)}{\omega(y)}\right)^{p'}\ \int_G\ \left(\frac{|h|(x)}{\omega(x)}\right)^{p'}\ d\mu(x),
\end{gather*}

\noindent
where the last estimate follows from the diagonal boundedness of $\omega$. We conclude that 
$$\forall_{a.e.\ y\in G}\ 1\lesssim \left(\frac{|h|(y)}{\omega(y)}\right)^{p'}.$$

\noindent
Since $h\in L^{p'}_{\frac{1}{\omega}}(G)$, the above implies that $G$ is compact. 
\end{proof}

\section{$L^p_{\omega}(G)-$conjecture for nilpotent, locally compact groups}
\label{section:Nilpotentgroups}

At first, let us recall the \textit{principal structure theorem} for locally compact abelian groups, which can be found in \cite{Morris}, p. 86:

\begin{thm}
Every locally compact abelian group $G$ has an open subgroup, which is topologically isomorphic to $\reals^N \times K$, for some compact group $K$ and a non-negative integer $N$. 
\label{principalstructure}
\end{thm}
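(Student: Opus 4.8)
The plan is to reduce the statement to the structure theorem for \emph{compactly generated} locally compact abelian groups and then pass to a suitable open subgroup. First I would fix a compact, symmetric neighbourhood $U$ of the identity $e$ and set $H = \bigcup_{n\geq 1} U^n$, the subgroup generated by $U$. Since $H$ contains the open set $U$, it is an open subgroup of $G$; and every open subgroup is automatically closed, so $H$ is itself a locally compact abelian group, and by construction it is compactly generated. Because $H$ is open in $G$, any open subgroup of $H$ is open in $G$ as well, so it suffices to produce an open subgroup of $H$ of the form $\reals^N\times K$ with $K$ compact.

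The crux is the classical structure theorem for compactly generated locally compact abelian groups (comp. \cite{Morris}), which asserts that $H$ is topologically isomorphic to $\reals^a\times\integers^b\times K$ for some non-negative integers $a,b$ and some compact abelian group $K$. I expect this to be the main obstacle, as it is where the genuine structure theory enters. I would assemble it from three ingredients: van Dantzig's theorem, which furnishes a compact open subgroup of the totally disconnected quotient $H/H_0$, where $H_0$ denotes the identity component; the structure of connected groups, namely $H_0\cong\reals^a\times K_0$ with $K_0$ compact and connected; and the classification of finitely generated abelian groups applied to the discrete, finitely generated quotient of $H$ by that compact open subgroup. Combining these extensions, and splitting off the vector part $\reals^a$ (which is possible because $\reals^a$ is divisible and connected), yields the asserted decomposition.

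With $H\cong\reals^a\times\integers^b\times K$ in hand, the conclusion is immediate. Since $\integers^b$ carries the discrete topology, the singleton $\{0\}$ is open in $\integers^b$, so $\reals^a\times\{0\}\times K$ is an open subgroup of $\reals^a\times\integers^b\times K$, and it is topologically isomorphic to $\reals^a\times K$. Transporting this back through the isomorphism produces an open subgroup of $H$, hence of $G$, of the form $\reals^N\times K$ with $N=a$ and $K$ compact, which is exactly the required statement. The only point to verify is that openness propagates: $\reals^a\times\{0\}\times K$ is open in the product because it is a product of open sets, and an open subgroup of the open subgroup $H$ is open in $G$ by transitivity.
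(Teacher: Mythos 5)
The paper does not prove this statement at all: it is quoted verbatim as the principal structure theorem from \cite{Morris}, p.~86, so there is no internal proof to compare against. Your outline is the standard derivation of that theorem and is correct as a reduction: the subgroup $H=\bigcup_{n\geq 1}U^n$ generated by a compact symmetric neighbourhood $U$ of $e$ is indeed open (hence closed), compactly generated and abelian; the structure theorem for compactly generated LCA groups gives $H\cong\reals^a\times\integers^b\times K$; and discarding the discrete factor $\integers^b$ by passing to the open subgroup $\reals^a\times\{0\}\times K$ finishes the argument, with openness propagating by transitivity exactly as you say. The one caveat is that essentially all of the difficulty lives inside the step you describe as ``combining these extensions'': van Dantzig's theorem, the identification $H_0\cong\reals^a\times K_0$ for connected groups, and especially the topological splitting of the vector subgroup are each substantial results (divisibility of $\reals^a$ gives an algebraic splitting, but making it a \emph{topological} direct factor needs the open mapping theorem or duality), so your proposal is really a correct reduction to the compactly generated structure theorem rather than a self-contained proof. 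Since the paper itself simply cites \cite{Morris} for the whole statement, your treatment is, if anything, more detailed than the paper's.
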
 

We need two more results, which come from \cite{ANR}. They say, under what circumstances, the closedness under convolution is inherited by quotients and subgroups. 

\begin{thm}(Proposition 3.1 in \cite{ANR})\\
Let $H$ be a compact and normal subgroup of $G$. If $L^p_{\omega}(G)$ is closed under convolution, the so is $L^p_{\dot{\omega}}(G/N)$, where 
$$\dot{\omega}(xH) = \inf_{y\in H}\ \omega(xy).$$
\label{Lpquotient}
\end{thm}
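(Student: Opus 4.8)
The plan is to transfer the closedness of $L^p_\omega(G)$ to the quotient by means of the two natural maps between $G$ and $G/H$: the lift $f\mapsto f\circ\pi$, where $\pi:G\rightarrow G/H$ is the canonical surjection, and the fibre-averaging projection $PF(xH)=\int_H F(xy)\,d\mu_H(y)$. (I write $G/H$, in agreement with the definition of $\dot\omega$.) Throughout I normalise the Haar measure of the compact group $H$ so that $\mu_H(H)=1$; since $H$ is compact it is unimodular, and $\Delta_G|_H$ is a continuous homomorphism from a compact group into $\reals_+$, hence trivial, so that $\Delta_G|_H=\Delta_H$ and $G/H$ carries a $G$-invariant measure $\mu_{G/H}$ for which Weil's quotient integral formula
$$\int_G\ F\ d\mu = \int_{G/H}\ \int_H\ F(xy)\ d\mu_H(y)\ d\mu_{G/H}(xH)$$
holds. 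A preliminary step is to verify that $\dot\omega$ is genuinely a weight on $G/H$: measurability is routine, and submultiplicativity follows by writing $x y_1\, z\, y_2 = xz\,(z^{-1}y_1z)\,y_2$ and noting that $(z^{-1}y_1z)y_2\in H$ by \emph{normality} of $H$, whence $\dot\omega(xzH)\leq\dot\omega(xH)\,\dot\omega(zH)$.

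Next I would establish two norm estimates linking the two spaces. Using $\dot\omega(xH)\leq\omega(xy)$ on each fibre together with Jensen's inequality for the probability measure $\mu_H$ and then Weil's formula, one obtains the contraction
$$\|PF\|_{p,\dot\omega} \leq \|F\|_{p,\omega}, \qquad F\in L^p_\omega(G).$$
For the lift $Lf=f\circ\pi$ I would prove $\|Lf\|_{p,\omega}\lesssim\|f\|_{p,\dot\omega}$, and the heart of the matter is the pointwise fibre estimate $\int_H\omega(xy)^p\,d\mu_H(y)\lesssim\dot\omega(xH)^p$. Fixing $x$ and choosing $y_0\in H$ with $\omega(xy_0)$ within a factor $2$ of the infimum $\dot\omega(xH)$, submultiplicativity gives $\omega(xy)\leq\omega(xy_0)\,\omega(y_0^{-1}y)$, and after the left-invariant substitution $z=y_0^{-1}y$ the residual integral becomes $\int_H\omega(z)^p\,d\mu_H(z)$, a \emph{finite} constant independent of $x$ by Lemma 2.1 in \cite{Kuznetsova} applied to the compact group $H$. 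Combined with Weil's formula this yields the claimed bound, so that $L$ maps $L^p_{\dot\omega}(G/H)$ into $L^p_\omega(G)$; moreover $P\circ L=\mathrm{id}$ since $\mu_H(H)=1$.

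I would then record the intertwining identity $P(F_1\star F_2)=PF_1\star PF_2$, the standard compatibility of convolution with fibre averaging. Expanding both sides by Weil's formula and Fubini, the two iterated integrals are matched by the substitution $t\mapsto (x^{-1}s u^{-1}s^{-1}x)\,t$ in the $H$-variable; this is legitimate precisely because $x^{-1}su^{-1}s^{-1}x\in H$, once more by \emph{normality} of $H$, and because $\mu_H$ is translation invariant. (As usual one first checks the identity on $C_c$-functions and then extends.)

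With these ingredients the conclusion is immediate. Given $f,g\in L^p_{\dot\omega}(G/H)$, put $F_1=Lf$ and $F_2=Lg$, which lie in $L^p_\omega(G)$ by the second paragraph; closedness of $L^p_\omega(G)$ under convolution gives $F_1\star F_2\in L^p_\omega(G)$, and therefore
$$f\star g = (PF_1)\star(PF_2) = P(F_1\star F_2) \in L^p_{\dot\omega}(G/H)$$
by the contraction estimate, so $L^p_{\dot\omega}(G/H)$ is closed under convolution. The main obstacle is the fibre estimate $\int_H\omega(xy)^p\,d\mu_H(y)\lesssim\dot\omega(xH)^p$ underlying the boundedness of the lift: it is exactly here that the compactness of $H$ (to keep $\int_H\omega^p\,d\mu_H$ finite) and the submultiplicativity of $\omega$ (to compare $\omega$ along a fibre with its infimum) are indispensable, whereas normality of $H$ is what makes $G/H$ a group, keeps $\dot\omega$ submultiplicative, and legitimises the substitution in the intertwining identity.
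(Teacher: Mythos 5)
The paper does not prove this statement at all --- it is imported verbatim (with the typo $G/N$ for $G/H$, which you rightly corrected) from Proposition 3.1 of \cite{ANR}, and your argument via the lift $f\mapsto f\circ\pi$, the fibre average $P$, Weil's quotient integral formula and the intertwining $P(F_1\star F_2)=PF_1\star PF_2$ is precisely the standard mechanism used there. Your proof is correct: the only points left implicit are routine (local boundedness of $\omega$ on the compact fibre to make $\int_H\omega^p\,d\mu_H$ finite, positivity of $\dot{\omega}$ so that the near-minimiser $y_0$ exists, and the density argument extending the intertwining identity from $C_c$ to $L^p_{\omega}$), and you have flagged each of them.
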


\begin{thm}(Proposition 3.2 in \cite{ANR})\\
If $L^p_{\omega}(G)$ is closed under convolution and there is an infinite and discrete subgroup $H<G$ and compact, symmetric $K$ such that 
$$\forall_{\substack{h_1,h_2\in H\\ h_1\neq h_2}}\ K^2h_1K^2 \cap K^2h_2K^2 = \emptyset,$$

\noindent
then $l^p(H)$ is closed under convolution.
\label{Lpsubgroup}
\end{thm}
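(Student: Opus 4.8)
The plan is to transfer the convolution algebra structure from $L^p_\omega(G)$ onto the discrete subgroup $H$ by replacing each $h\in H$ with a bump supported on the compact translate $Kh$, and to let the two-sided separation of the sets $K^2hK^2$ guarantee that distinct points of $H$ never interfere. Since finitely supported sequences are dense in $l^p(H)$ and convolution is bilinear, it suffices to prove a bilinear bound $\|\xi\star\eta\|_p\lesssim\|\xi\|_p\,\|\eta\|_p$ for finitely supported $\xi,\eta:H\to\complex$; moreover, because $|\xi\star\eta|\leq|\xi|\star|\eta|$ pointwise while $\||\xi|\|_p=\|\xi\|_p$, I may assume $\xi,\eta\geq0$, which removes any cancellation from the estimates below.

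For the embedding, note that $\int_{Kh}\omega^p\,d\mu<\infty$: by submultiplicativity $\int_{Kh}\omega^p\,d\mu\leq\omega(h)^p\int_K\omega^p\,d\mu$, and $\int_K\omega^p\,d\mu<\infty$ by Lemma~2.1 in \cite{Kuznetsova}. Hence $w_h:=\|\mathds{1}_{Kh}\|_{p,\omega}\in(0,\infty)$, and I set $f_\xi=\sum_{h\in H}\tfrac{\xi_h}{w_h}\,\mathds{1}_{Kh}\in L^p_\omega(G)$. The separation hypothesis forces the translates $Kh$ to be pairwise disjoint, so the normalization yields exactly $\|f_\xi\|_{p,\omega}=\|\xi\|_p$ (and likewise $\|f_\eta\|_{p,\omega}=\|\eta\|_p$). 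Closedness of $L^p_\omega(G)$ together with Lemma~\ref{fstarglesssimfg} then gives the upper half of the estimate, $\|f_\xi\star f_\eta\|_{p,\omega}\lesssim\|f_\xi\|_{p,\omega}\,\|f_\eta\|_{p,\omega}=\|\xi\|_p\,\|\eta\|_p$.

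The heart of the argument is a localization lemma extracted from the separation hypothesis: if $g\in H\cap K^2hK^2$ then $g\in K^2gK^2\cap K^2hK^2$, so by disjointness $g=h$; that is, $H\cap K^2hK^2=\{h\}$. Consequently, for $x\in Kh$ the value $f_\xi\star f_\eta(x)=\sum_{h_1,h_2}\tfrac{\xi_{h_1}\eta_{h_2}}{w_{h_1}w_{h_2}}\,\mathds{1}_{Kh_1}\star\mathds{1}_{Kh_2}(x)$ receives a nonzero contribution only from pairs with $x\in Kh_1Kh_2$, and writing $x=kh=k_1h_1k_2h_2$ forces $hh_2^{-1}\in H\cap K^2h_1K^2=\{h_1\}$, i.e. $h_1h_2=h$. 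Thus on the cell $Kh$ the convolution is built solely from the factorizations of $h$, and integrating the nonnegative function $f_\xi\star f_\eta$ over $Kh$ isolates a quantity proportional to the discrete convolution $(\xi\star\eta)_h=\sum_{h_1h_2=h}\xi_{h_1}\eta_{h_2}$. Comparing $\int_{Kh}f_\xi\star f_\eta\,d\mu$ with $\|f_\xi\star f_\eta\|_{p,\omega}$ over the disjoint cells (by H\"older on each $Kh$, using the local control $\omega\asymp\omega(h)$ there) and summing the $p$-th powers should yield $\|\xi\star\eta\|_p\lesssim\|f_\xi\star f_\eta\|_{p,\omega}$, closing the loop.

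The main obstacle is to make the constants in the lower bound uniform in $h$. The localization lemma is clean and genuinely general, but quantifying how much of the mass of $\mathds{1}_{Kh_1}\star\mathds{1}_{Kh_2}$ actually lands in $Kh$, and comparing $w_{h_1}w_{h_2}$ with $w_h$, both involve the noncommutative distortion of $K$ under conjugation by $H$ (for instance the overlap $Kh_1\cap h_1K$ and the spread of $h^{-1}Vh$), which a priori degenerates as $h$ runs off to infinity; the same local boundedness of the submultiplicative weight underlies the comparison $\omega\asymp\omega(h)$ on $Kh$. The two-sided buffers $K^2\cdot K^2$ in the separation hypothesis are precisely what must absorb this distortion, and controlling it uniformly — together with the bookkeeping that recombines the contributions over all factorizations of a fixed $h$ into $(\xi\star\eta)_h$ rather than merely bounding them termwise — is the delicate part; the density and bilinearity reductions around it are routine.
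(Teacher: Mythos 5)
The paper itself offers no proof of this statement --- it is imported verbatim as Proposition~3.2 of \cite{ANR} --- so your attempt can only be judged against the standard Rajagopalan-type argument that reference adapts. Your skeleton is the right one: reduce to nonnegative, finitely supported $\xi,\eta$; embed them into $L^p_{\omega}(G)$ as normalized bumps over translates of $K$; get the upper bound from closedness under convolution via Lemma~\ref{fstarglesssimfg}; use the separation hypothesis to show that at any point only the factorizations of a single $h$ contribute (your observation $H\cap K^2hK^2=\{h\}$ is correct and is indeed the role of the $K^2$ buffers); and recover $(\xi\star\eta)_h$ by integrating over disjoint cells.

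The gap is exactly where you flag it, and it is not bookkeeping --- it is fatal to your symmetric choice of bumps. With right translates $Kh_1$ and $Kh_2$ for both factors, the quantity you must bound from below is $\mathds{1}_{Kh_1}\star\mathds{1}_{Kh_2}(x)=\mu\left(Kh_1\cap xh_2^{-1}K\right)$, which for $x=vh_1h_2$ equals $\Delta(h_1)\,\mu\left(K\cap v\,h_1Kh_1^{-1}\right)$; the conjugate $h_1Kh_1^{-1}$ can drift entirely away from $K$ as $h_1$ ranges over the infinite set $H$, and nothing in the hypotheses controls this --- the $K^2$ buffers only decide \emph{which} pairs contribute at a point, not \emph{how much}. (In the paper's application $H\subset Z(G)$, so conjugation is trivial and your construction would survive, but the theorem is stated for arbitrary $H<G$.) The classical remedy is to break the symmetry: take $f_{\xi}=\sum_h\frac{\xi_h}{\omega(h)}\mathds{1}_{hK}$ with \emph{left} translates and $f_{\eta}=\sum_h\frac{\eta_h}{\omega(h)}\mathds{1}_{Kh}$ with \emph{right} translates. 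Then left invariance gives the exact identity $\mathds{1}_{h_1K}\star\mathds{1}_{Kh_2}(h_1xh_2)=\mathds{1}_K\star\mathds{1}_K(x)$, a fixed profile independent of $h_1,h_2$, so no conjugation distortion ever appears; the localization still works because $h_1K^2h_2\cap h_1'K^2h_2'\neq\emptyset$ forces $h_1'^{-1}h_1\in H\cap K^2\left(h_2'h_2^{-1}\right)K^2$ and hence $h_1h_2=h_1'h_2'$. With that change your outline closes, provided you also record that $\omega\asymp\omega(h)$ a.e.\ on $hK$ and on $Kh$ (with constants depending only on $K$, via submultiplicativity and local boundedness of $\omega$) and the comparison $\omega(h_1)\,\omega(h_2)\lesssim\omega(h_1h_2)$ needed to recombine the cell integrals into the \emph{unweighted} norm $\|\xi\star\eta\|_p$ --- this last step is where diagonal boundedness, available in the paper's application though absent from the statement, quietly enters.
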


With such tremendous tools at hand, we are able to prove a generalization of Lemma 1.7 in \cite{RajagopalanII}. Our theorem reduces to Rajagopalan's result, when $\omega \equiv 1$. 

\begin{thm}
If $L^p_{\omega}(G)$ is closed under convolution, where $\omega$ is diagonally bounded, then the center $Z(G)$ is compact. 
\label{centeriscompact}
\end{thm}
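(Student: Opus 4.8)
The plan is to argue by contradiction and to reduce the claim to Theorem~\ref{LpconjectureforLCA}. Note first that $Z(G)$, being the intersection over $g\in G$ of the closed sets $\{x\in G: xgx^{-1}g^{-1}=e\}$, is a closed, hence locally compact, abelian subgroup of $G$. Suppose, for contradiction, that $Z(G)$ is not compact. The naive idea would be to locate an infinite discrete subgroup $H<Z(G)$, verify the separation hypothesis of Theorem~\ref{Lpsubgroup}, deduce that $l^p(H)$ is closed under convolution, and then invoke Theorem~\ref{LpconjectureforLCA} (with the trivial, and trivially diagonally bounded, weight $\omega\equiv1$) to force $H$ to be compact --- a contradiction. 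The obstruction is that a non-compact locally compact abelian group need not contain \emph{any} infinite discrete subgroup (for instance the $p$-adic numbers $\rationals_p$, whose nontrivial proper closed subgroups are the compact open subgroups $p^n\integers_p$). This is the crux, and I would circumvent it by first dividing out a compact piece.

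Concretely, I would apply Theorem~\ref{principalstructure} to $Z(G)$ to obtain an open subgroup $A\leq Z(G)$ topologically isomorphic to $\reals^N\times C$ with $C$ compact and $N\geq 0$. Let $K_0\leq Z(G)$ be the compact subgroup corresponding to $\{0\}\times C$; since $K_0\subset Z(G)$ it is central, hence compact and normal in $G$. Theorem~\ref{Lpquotient} then shows that $L^p_{\dot\omega}(\overline G)$ is closed under convolution, where $\overline G=G/K_0$ and $\dot\omega(xK_0)=\inf_{y\in K_0}\omega(xy)$. The weight $\dot\omega$ is again diagonally bounded: because $e\in K_0$ one has $\dot\omega(xK_0)\leq\omega(x)$ and $\dot\omega(x^{-1}K_0)\leq\omega(x^{-1})$, so $\dot\omega(xK_0)\,\dot\omega((xK_0)^{-1})\leq\omega(x)\,\omega(x^{-1})<M$. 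Writing $Z'=Z(G)/K_0$, the image of $A$ is an open subgroup of $Z'$ isomorphic to $\reals^N$, and $Z'$ is still non-compact (otherwise $Z(G)$ would be an extension of the compact group $Z'$ by the compact group $K_0$, hence compact). Since $\overline{G}$ is a locally compact Hausdorff group and $K_0$ is compact, $Z'$ embeds in $\overline G$ as a closed central subgroup, so that its quotient topology agrees with the subspace topology from $\overline G$.

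Next I would produce an infinite discrete central subgroup $H$ of $\overline G$. If $N\geq1$, take $H\cong\integers$ inside a factor $\reals\subset\reals^N\cong A/K_0$, which is discrete in the open subgroup $A/K_0$ and hence in $Z'$ and in $\overline G$. If $N=0$, then $A/K_0$ is trivial, so $Z'$ is discrete; being non-compact it is infinite, and I take $H=Z'$. In either case $H$ is an infinite discrete subgroup of $\overline G$ contained in its center. To verify the hypothesis of Theorem~\ref{Lpsubgroup}, use discreteness to choose an open $U\ni e$ in $\overline G$ with $U\cap H=\{e\}$; by continuity of the eightfold product map and local compactness, choose a compact symmetric neighbourhood $K$ of $e$ with $K^8\subseteq U$. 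Since every $h\in H$ is central, $K^2hK^2=hK^4$, so $K^2h_1K^2\cap K^2h_2K^2\neq\emptyset$ forces $h_2^{-1}h_1\in K^4K^4=K^8$; as $K^8\cap H=\{e\}$, the sets are disjoint whenever $h_1\neq h_2$.

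Finally, Theorem~\ref{Lpsubgroup} yields that $l^p(H)$ is closed under convolution. Identifying $l^p(H)$ with $L^p_{\omega\equiv1}(H)$ for the counting (Haar) measure on the discrete abelian group $H$, and noting that $\omega\equiv1$ is diagonally bounded, Theorem~\ref{LpconjectureforLCA} forces $H$ to be compact, contradicting that $H$ is infinite and discrete. Hence $Z(G)$ is compact. I expect the main difficulty to be precisely the passage to $\overline G=G/K_0$: checking that $\dot\omega$ remains diagonally bounded, that $Z'$ really becomes ``discrete modulo $\reals^N$'' and embeds topologically in $\overline G$, and that it is the centrality of $H$ which reduces the separation requirement $K^2h_1K^2\cap K^2h_2K^2=\emptyset$ to the single condition $K^8\cap H=\{e\}$.
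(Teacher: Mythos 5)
Your proof is correct and follows essentially the same route as the paper's: apply Theorem~\ref{principalstructure} to the closed abelian subgroup $Z(G)$, use Theorem~\ref{Lpquotient} to divide out the compact factor, extract an infinite discrete central subgroup, and rule it out via Theorems~\ref{Lpsubgroup} and~\ref{LpconjectureforLCA}. The only (welcome) additions are that you make explicit why centrality of $H$ reduces the separation condition $K^2h_1K^2\cap K^2h_2K^2=\emptyset$ to $K^8\cap H=\{e\}$, and that $\dot\omega$ stays diagonally bounded --- points the paper leaves implicit.
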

\begin{proof}
Since $Z(G)$ is a closed subgroup of $G$ (comp. \cite{Pontrjagin}, p. 77), it is a locally compact abelian group. By Theorem \ref{principalstructure}, $Z(G)$ contains either an infinite, discrete, cyclic subgroup $H$ (if $N\geq 1$) or a compact, open subgroup $K$ (if $N=0$). For the sake of contradiction, we suppose that the former is true. 

Let $U_e\in\tau_G$ be such that $U_e \cap H = \{e\}$, this is possible due to discreteness. By continuity of multiplication, let $V_e \in\tau_G$ be a symmetric set such that \mbox{$V_e^2 \subset U_e$}. Suppose there exists $h\in H$ such that $V_e \cap hV_e \neq\emptyset$. This implies that $h\in V_e^2\subset U_e$. By the choice of $U_e$, we have $h = e$. In other words, we have established that 
$$\forall_{\substack{h_1,h_2\in H\\ h_1\neq h_2}}\ h_1V_e \cap h_2V_e = \emptyset.$$

\noindent
Using the continuity of multiplication and the fact that Hausdorff groups are automatically $T_{3\frac{1}{2}}$, we infer the existence of $W_e\in\tau_G$ such that $\overline{W_e}^4 \subset V_e$. 

At this stage, we are ready to apply Theorem \ref{Lpsubgroup}, concluding that $l^p(H)$ is closed under convolution. Next, Theorem \ref{LpconjectureforLCA} implies that $H$ must be compact. This is impossible due to $H$ being both infinite and discrete. The contradiction that we have reached means that there exists a compact, open subgroup $K$ of $Z(G)$. 

Let us consider a quotient group $Z(G)/K$, which is discrete by openess of $K$ (comp. \cite{HavinNikolski}, p. 172). Again, for the sake of contradiction, suppose that $Z(G)/K$ is infinite. By Theorem \ref{Lpquotient}, $L^p_{\dot{\omega}}(G/K)$ is closed under convolution. However, $Z(G/K)\supset Z(G)/K$, which means that $Z(G/K)$ contains an infinite, discrete subgroup. We have already shown above that the last two statements are incompatible. It follows that $Z(G)/K$ must be finite. 

At this point, we are able to conclude the proof. The subgroup $K$ is compact and we have just shown that $Z(G)/K$ is finite, so it is compact as well. Invoking Exercise 13 in \cite{Munkres}, p. 172 we are done. 
\end{proof}

Let $G$ be a nilpotent, locally compact group. This means that 
\begin{gather}
\{e\} = Z_0 \triangleleft Z_1 \triangleleft  \ldots \triangleleft Z_N = G,
\label{nilpotentchain}
\end{gather}

\noindent
where $Z_n$ denotes the $n-$th center of $G$, i.e.
$$Z_n = \big\{x \in G\ : \ \forall_{y\in G}\ [x,y]\in Z_{n-1}\big\} \hspace{0.4cm}\text{and}\hspace{0.4cm} Z_0 = \{e\}.$$

\noindent
The chain (\ref{nilpotentchain}) is sometimes referred to as the \textit{upper central series}. Obviously, $Z_1 = Z(G)$ and it is well-known that every $Z_n$ is a normal subgroup of $G$ - this is not completely trivial, as normality is not transitive in general. Moreover, it is an easy exercise to check that 
\begin{gather}
\forall_{n\in\naturals}\ Z_{n+1}/Z_n = Z(G/Z_n).
\label{Zn1Zn}
\end{gather}

We have finally reached the climax of the paper, the $L^p_{\omega}-$conjecture for nilpotent, locally compact groups. Note that the following result generalizes Theorem 1.10 in \cite{RajagopalanII}:

\begin{thm}
Let $G$ be a nilpotent, locally compact group and let $L^p_{\omega}(G)$ be closed under convolution, where $\omega$ is diagonally bounded. Then $G$ is compact. 
\label{nilpotentgroups}
\end{thm}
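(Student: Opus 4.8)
The plan is to argue by induction on the nilpotency class $N$, that is, on the length of the upper central series (\ref{nilpotentchain}). In the base case $N = 1$ we have $G = Z_1 = Z(G)$, so $G$ is abelian and Theorem \ref{LpconjectureforLCA} immediately gives that $G$ is compact.

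For the inductive step I would assume the assertion for every nilpotent, locally compact group whose upper central series has length smaller than $N$, and take $G$ of class $N$. First, Theorem \ref{centeriscompact} shows that $Z_1 = Z(G)$ is compact; as the center it is normal, so $G/Z_1$ is a locally compact group and Theorem \ref{Lpquotient} (with $H = Z_1$) guarantees that $L^p_{\dot{\omega}}(G/Z_1)$ is closed under convolution, where $\dot{\omega}(xZ_1) = \inf_{y\in Z_1}\omega(xy)$. By (\ref{Zn1Zn}) the $n$-th center of $G/Z_1$ equals $Z_{n+1}/Z_1$, so the upper central series of $G/Z_1$ reaches the whole group after $N-1$ steps; hence $G/Z_1$ is nilpotent of class $N-1$. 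Provided $\dot{\omega}$ is still diagonally bounded, the induction hypothesis applies and yields that $G/Z_1$ is compact. Since $Z_1$ is a compact normal subgroup with compact quotient $G/Z_1$, the group $G$ is itself compact (exactly as in the final step of the proof of Theorem \ref{centeriscompact}), which completes the induction.

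The one point that is not a mere citation --- and hence the main obstacle --- is verifying that diagonal boundedness passes to $\dot{\omega}$. Here everything hinges on $Z_1$ being the \emph{center}: it is a subgroup (so closed under inversion) whose elements commute with all of $G$. If $M > 0$ satisfies $\omega(x)\,\omega(x^{-1}) < M$ for every $x$, then for any $y_0 \in Z_1$ I would estimate
$$\dot{\omega}(xZ_1)\,\dot{\omega}(x^{-1}Z_1) = \Big(\inf_{y\in Z_1}\omega(xy)\Big)\Big(\inf_{z\in Z_1}\omega(x^{-1}z)\Big) \leq \omega(xy_0)\,\omega\big(x^{-1}y_0^{-1}\big) = \omega(xy_0)\,\omega\big((xy_0)^{-1}\big) < M,$$
the penultimate equality using the centrality of $y_0$ to rewrite $x^{-1}y_0^{-1} = (xy_0)^{-1}$. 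Thus $\dot{\omega}$ is diagonally bounded with the same constant $M$, which closes the gap. (That $\dot{\omega}$ is a genuine weight is already subsumed in Theorem \ref{Lpquotient}; its submultiplicativity likewise exploits centrality, since bounding $\omega((xy)(x'y'))$ from below by $\dot{\omega}(xx'Z_1)$ needs $y$ and $x'$ to commute.)
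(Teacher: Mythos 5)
Your proof is correct and follows essentially the same route as the paper: climb the upper central series, using Theorem \ref{centeriscompact} for the center, Theorem \ref{Lpquotient} to pass to the quotient, and the fact that an extension of a compact group by a compact group is compact. The only differences are that you phrase the iteration as a formal induction on the nilpotency class, and that you explicitly verify that $\dot{\omega}$ inherits diagonal boundedness --- a hypothesis needed to reapply Theorem \ref{centeriscompact} to $G/Z_1$, which the paper uses but never checks; your verification of it is correct (and in fact normality of the subgroup would suffice in place of centrality, via $z_0 = xy_0^{-1}x^{-1}$).
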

\begin{proof}
By Theorem \ref{centeriscompact}, we know that $Z_1=Z(G)$ is compact. Due to \mbox{Theorem \ref{Lpquotient}}, $L^p_{\dot{\omega}}(G/Z_1)$ is closed under convolution. By (\ref{Zn1Zn}) we have $Z_2/Z_1 = Z(G/Z_1)$, so again using Theorem \ref{centeriscompact}, we conclude that $Z_2/Z_1$ is compact. Invoking \mbox{Exercise 13} in \cite{Munkres}, p. 172 we establish that $Z_2$ is compact. Proceeding in a similar vein, we prove the compactness of $G$ in a finite number of steps. 
\end{proof}

As a final remark, we note that the above proof works for \textit{hypernilpotent}, locally compact groups (comp. \cite{DummitFoote}, p. 190-191). These are groups whose upper central series is countable and 
$$G = \bigcup_{n\in\naturals}\ Z_n.$$

\noindent
An analogous reasoning as in Theorem \ref{nilpotentgroups} shows that if $L^p_{\omega}(G)$ is closed under convolution ($\omega$ is still diagonally bounded), then $G$ must be $\sigma-$compact.

\end{document}